\begin{document}
\newtheorem{theorem}{Theorem}[section]
\newtheorem{definition}[theorem]{Definition}
\newtheorem{proposition}[theorem]{Proposition}
\newtheorem{lemma}[theorem]{Lemma}
\newtheorem{remark}{Remark}[section]
\newtheorem{corollary}[theorem]{Corollary}
\newtheorem{question}{Question}
\newtheorem{example}{Examples}[section]
\newtheorem{notation}{Notation}[section]
\newtheorem{claim}{Claim}[theorem]
\newtheorem{fact}[theorem]{Fact}
\newcommand\cl{\begin{claim}}
\newcommand\ecl{\end{claim}}
\newcommand\rem{\begin{remark}\upshape}
\newcommand\erem{\end{remark}}
\newcommand\ex{\begin{example}\upshape}
\newcommand\eex{\end{example}}
\newcommand\nota{\begin{notation}\upshape}
\newcommand\enota{\end{notation}}
\newcommand\dfn{\begin{definition}\upshape}
\newcommand\edfn{\end{definition}}
\newcommand\cor{\begin{corollary}}
\newcommand\ecor{\end{corollary}}
\newcommand\thm{\begin{theorem}}
\newcommand\ethm{\end{theorem}}
\newcommand\prop{\begin{proposition}}
\newcommand\eprop{\end{proposition}}
\newcommand\lem{\begin{lemma}}
\newcommand\elem{\end{lemma}}
\newcommand\fct{\begin{fact}}
\newcommand\efct{\end{fact}}
\providecommand\qed{\hfill$\quad\Box$}
\newcommand\pr{{Proof:\;}}
\newcommand\prc{\par\noindent{\em Proof of Claim: }}
\newcommand\dom{{\text{dom}}}
\newcommand\Pn{{P_n}}
\newcommand\ord{{ord}}
\newcommand\rk{{rk}}
\newcommand\G{{G^{\n}}}
\newcommand\car{{\rm{char}}}
\newcommand\M{{{\mathcal M}}}
\newcommand\N{{{\mathcal N}}}
\newcommand\K{{{\mathcal K}}}
\newcommand\D{{{\mathcal D}}}
\newcommand\A{{{\mathcal A}}}
\newcommand\Ps{{{\mathcal P}}}
\newcommand\E{{\mathcal E}}
\newcommand\X{{{\bold{X}}}}
\newcommand\x{{{\bold{x}}}}
\newcommand\Y{{{\bold{Y}}}}
\renewcommand\O{{{\mathcal O}}}
\renewcommand\L{{{\mathcal L}}}
\newcommand\Th{{\text{Th}}}
\newcommand\IZ{{\mathbb Z}}
\newcommand\IQ{{\mathbb Q}}
\newcommand\IR{{\mathbb R}}
\newcommand\IN{{\mathbb N}}
\newcommand\IC{{\mathbb C}}
\newcommand\F{{\mathbb F}}
\newcommand\Se{{\mathcal S}}
\newcommand\V{{\mathcal V}}
\newcommand\W{{\mathcal W}}
\newcommand\T{{\mathcal T}}
\newcommand{\Ecl}{\mathrm{ecl}}
\newcommand{\Ext}{\mathrm{Ext}}
\newcommand{\Tr}{\mathrm{Tr}}
\newcommand{\Cl}{\mathrm{cl}}
\newcommand\si{{\sigma}}
\newcommand\n{{\nabla}}
\newcommand\C{{\mathcal C}}
\newcommand\Lr{{{\mathcal L}_{\text{rings}}}}
\newcommand\Lrd{{{\mathcal L}^*_{\text{rings}}}}
\newcommand\B{{\mathcal B}}
\newcommand\La{{\mathcal L}}
\def\U{{ \mathfrak U}}
\def\B{{\mathfrak B}}
\def\I{{\mathcal I}}
\def\Ma{{\mathfrak M}}
\let\le=\leqslant
\let\ge=\geqslant
\let\subset=\subseteq
\let\supset=\supseteq

\author[Point]{Fran\c coise {Point}$^{(\dagger)}$}
\thanks{$(\dagger)$ Research Director at the "Fonds de la Recherche
  Scientifique (F.R.S.-F.N.R.S.)"}
\address{Department of Mathematics (De Vinci)\\ UMons\\ 20, place du Parc 7000 Mons, Belgium}
\email{point@math.univ-paris-diderot.fr}
\author[Regnault]{Nathalie {Regnault}}
\address{Department of Mathematics (De Vinci)\\ UMons\\ 20, place du Parc 7000 Mons, Belgium}
\email{regnaultnathalie@gmail.com}
\title{Exponential ideals and a Nullstellensatz}
\date{\today}

\begin{abstract} We prove a version of a Nullstellensatz for partial exponential fields $(F,E)$, even though the ring of exponential polynomials $F[\X]^E$ is not a Hilbert ring. We show that under certain natural conditions, one can embed an ideal of $F[\X]^E$ into an exponential ideal. In case the ideal consists of exponential polynomials with one iteration of the exponential function, we show that these conditions can be met. We apply our results to the case of ordered exponential fields.
\end{abstract}
\maketitle
\par MSC classification: primary: 03C60 (secondary: 12L12, 12D15).
 \par Key words:  exponential ideals, Nullstellensatz, partial exponential fields.

\section{Introduction}
\par There is an extensive literature on {\it Nullstellens\"atze} for expansions of fields (ordered, differential, $p$-valued) and several versions of abstract Nullstellens\"atze attempting to encompass those cases in a general framework (see for instance \cite{Mc}, \cite{Weis}). When $K$ is a (pure) algebraically closed field, Hilbert's Nullstellensatz establishes the equivalence between the following two properties: a system of polynomial equations (with coefficients in $K$) has a common solution (in $K$) and the ideal generated by these polynomials is nontrivial in the polynomial ring $K[\X]$. 
\par It is well-known that this equivalence no longer hold in, for instance, the field of complex numbers endowed with the exponential function (see Remark \ref{terzo}).
Nevertheless in this note we will give a version of a Nullstellensatz for exponential fields, namely fields $(K, E)$ endowed with a partially defined  exponential function $E$. We start by recalling the notion of $E$-algebraic closure, first defined by A. Macintyre, then used by A. Wilkie in an o-minimal setting and then investigated by J. Kirby in general. Then we show how to construct maximal (or prime) ideals which are also exponential ideals, using the fact that exponential polynomial rings are union of group rings, pointing out that there are prime ideals that are not an intersection of maximal ideals. 
Finally under a natural condition on the system of exponential polynomials we are dealing with, we prove a Nullstellensatz in this setting (see below for a precise statement).
\par In this line of research, let us point out two former works. In the eighties, K. Manders investigated the notion of exponential ideals \cite{Manders}, and pinpointed several obstructions to develop an analog of the classical Nullstenllensatz. Later in her thesis \cite{T}, G. Terzo pointed out that very few results are known about exponential ideals and that even seemingly basic questions are not yet answered (see Remark \ref{terzopage18}). 
Let us now describe our main result.

\

Let $(F, E)$ be an exponential field, namely a field endowed with a morphism $E$ from its additive group $(F,+,0)$ to its multiplicative group $(F^*,.,1)$ with $E(0)=1$.
\par Using the construction of the free $E$-ring $F[\X]^E$ of exponential polynomials over $F$ on $\bold{X}$ as an increasing union of group rings, namely letting $F_0:=F[\bold{X}]$ and $F\X]^E=\bigcup_{\ell\geq 0} F_\ell$ (see section \ref{free} and \cite{D}), we show the following. First let us state a corollary of our main theorem, where we restrict ourselves to elements of $F_{1}$, namely exponential polynomials with at most one iteration of the exponential function.
\smallskip
\par {\bf Corollary} (Later Corollary 3.11) Let $f_1,\ldots, f_{m}, g\in F_1$ and let $I$ be the ideal of $F_{1}$ generated by $f_{1},\ldots, f_{m}$.
Assume that $g$ vanishes at each common zero of $f_{1},\ldots, f_{m}$ in any partial exponential field containing $(F,E)$. Then $g$ belongs to $J(I)$, the Jacobson radical of $I$.
\smallskip
\par In order to consider the general case of exponential polynomials in $F[\X]^E$, we introduce on an ideal $I$ of $F_\ell$, the following (expected) $E$-compatibility condition at level $\ell$ :
\[
(\Ext)_\ell:\;\;\;\;\forall\;\;h\in F_{\ell-1}\cap I,\;\; E(h)-1\in I. 
\]
\par {\bf Theorem}  (Later Theorem 3.10) Let $f_{1},\ldots, f_{m}, g\in F[\bold{X}]^E$ and let $\ell$ be minimal such that $f_{1},\ldots, f_{m}, g\in F_{\ell}$.  Suppose that there is a maximal ideal $M$ of $F_\ell$ containing $f_{1},\ldots, f_{m}$ and satisfying the $E$-compatibility condition $(\Ext)_\ell$.

 Assume that $g$ vanishes at each common zero of $f_{1},\ldots, f_{m}$ in any partial exponential field containing $(F,E)$. Then $g$ belongs to $M$.
\medskip

\par The paper is organized as follows. In section 2, we recall the construction of free exponential rings (and the complexity function they can be endowed with), the definition of exponential ideals and of the $E$-algebraic closure ($\Ecl$). 
We note that free exponential rings are not Hilbert rings, namely that there are prime ideals which are not an intersection of maximal ideals. 
\par In section 3, we prove our main result (Theorem \ref{Nul}) by constructing step by step maximal (respectively prime) exponential ideals, containing a given proper ideal under the above $E$-compatibility condition $(\Ext)_\ell$, at some appropriate finite level $\ell$. Finally, we apply our techniques when the field of coefficients is ordered, considering real exponential ideals.

\section{Preliminaries}
Throughout, all our rings $R$ will be commutative rings of characteristic $0$ with identity $1$. Let $\IN^*:=\IN\setminus\{0\}$, $R^*:=R\setminus\{0\}$.
We will work in the language of rings $\L_{rings}:=\{+,.,-,0,1\}$ augmented by a unary function $E$; let $\L_{E}:=\L_{rings}\cup\{E\}$. 
\dfn {\rm \cite{D}} An $E$-ring $(R,E)$ is a ring $R$ equipped with a map\\ \mbox{$E: (R,+,0)\to (R^*,\cdot,1)$} satisfying $E(0)=1$ and $\forall x\forall y\;(E(x+y)=E(x).E(y))$. 
\par An $E$-domain is an $E$-ring with no zero-divisors; an $E$-field is an $E$-domain which is a field.
\edfn
\par We will need to work with partial $E$-rings, namely rings where the exponential function is only partially defined and so the corresponding language contains a unary predicate for the domain of the exponential function.  
\dfn  (See \cite[Definition 2.2]{K}).
A partial $E$-ring is a two-sorted structure $(R,A(R),+_{R},\cdot,+_{A},E)$, where $(R,+_{R},\cdot,0,1)$ is a domain, $(A(R),+_{A},0)$ is an abelian monoid and \mbox{$E:(A(R),+_{A},0)\to (R,\cdot,1)$} is a morphism of monoids. Further we have an injective homomorphism of abelian monoids from $(A(R),+_{A})$ to $(R,+_{R})$; we identify $A(R)$ its image in $R$ and write just one operation $+$. We get the corresponding notions of partial $E$-domains and partial $E$-fields.
\edfn
\par Let $R$ be a partial $E$-ring and $A(R)$ a group, then $E(A(R))$ is a subset of the set of invertible elements of $R$.
\par  The above definition of partial $E$-rings differs from \cite[Definition 2.2]{K}. Indeed, there one requires in addition that $R$ is a $\IQ$-algebra, $A(R)$ is a $\IQ$-vector space and as such endowed with scalar multiplication $\cdot q$ for each $q\in \IQ$.
\par To simplify notations and in order to treat the cases of partial $E$-rings and $E$-rings simultaneously, we will denote the partial $E$-ring $(R,A(R),E)$ simply by $(R,E)$.

\ex We recall below classical examples of exponential (partial) $E$-rings and $E$-fields.

\begin{enumerate}
\item Let $\IR$ be the field of real numbers (respectively $\IC$ the field of complexes) endowed with the exponential function $exp(x):=\sum_{n\geq 0} \frac{x^n}{n!}$.
\item Let $\IZ_p$ be the ring of $p$-adic integers endowed with the exponential function $x \mapsto exp(px)$, $p>2$. 
\item Let $\IZ$ be the ring of integers endowed with the exponential function $x\mapsto 2^x$ only defined on the positive integers. 
\item Let $\IQ_p$ be the field of $p$-adic numbers (respectively $\IC_p$ the completion of the algebraic closure of $\IQ_{p}$) endowed with the exponential function $exp(px)$ restricted to $\IZ_p$ (respectively to $\O_p$ the valuation ring of $\IC_p$), $p>2$. 
\end{enumerate}
\par Let $(F,E)$ be a partial $E$-field. Consider the field of Laurent series $F((t))$. Write $r\in F[[t]]$ as $r_0+r_1$ where $r_0\in F$ and $r_1\in t F[[t]]$. For $r_0\in A(F)$, extend $E$ on $F[[t]]$ as follows: $E(r_0+r_1):=E(r_0) exp(r_1)$. By Neumann's Lemma, the series $exp(r_1)\in F((t))$ \cite[chapter 8, section 5, Lemma]{F}. So  $F((t))$ can be endowed with the structure of a partial $E$-field.
\par More generally let $G$ be a totally ordered abelian group and consider the Hahn field $F((G))$. Let $\M_{v}$ denote the maximal ideal of $F((G))$. Then $F((G))$ can be endowed with the structure of a partial $E$-field by defining $E$ on the elements $r$ of the valuation ring which can be decomposed as  $r_0+r_1$ with $r_0\in A(F)$ and $r_1\in \M_v$. Then $exp(r_1)\in F((G))$ by Neumann's lemma cited above and $E(r_0+r_1):=E(r_0) exp(r_1)$, for $r_0\in A(F)$.
\eex 
 \subsection{Free exponential rings}\label{free}
\par For the convenience of the reader not familiar with the construction of the ring of exponential polynomials, we recall its construction below (see for instance \cite{D}, \cite{M}). 

We denote by $\IZ[\X]^E$ the construction on $\X:=X_{1},\ldots,X_{n}$ and by $R[\X]^E$ the construction on the $E$-ring $(R,E)$. 
\par The elements of these rings are called {\it $E$-polynomials} in the indeterminates $\X$.
\par The ring $R[\X]^E$ is constructed by stages as follows:
let $R_{-1}:=R$, $R_{0}:=R[\X]$ and $A_{0}$ the ideal generated by $\X$ in $R[\X]$. We have $R_{0}=R\oplus A_{0}$.
For $k\geq 0$, 
let $t^{A_{k}}$ be a multiplicative copy of the additive group $A_{k}$.  
\par Then, set $R_{k+1}:=R_{k}[t^{A_{k}}]$ and let $A_{k+1}$ be the free $R_{k}-$submodule generated by $t^{a}$ with $a\in A_{k}-\{0\}$. 
Then $R_{k+1}=R_{k}\oplus A_{k+1}$. 
\par By induction on $k\geq 0$, one shows the following isomorphism: $R_{k+1}\cong R_{0}[t^{A_{0}\oplus\cdots\oplus A_{k}}],$ using the fact that $R_{0}[t^{A_{0}\oplus\cdots\oplus A_{k}}]\cong R_{0}[t^{A_{0}\oplus\cdots\oplus A_{k-1}}][t^{A_{k}}]$ \cite[Lemma 2]{M}.
\par We define the map $E_{-1}: R_{-1}\to R_0$ as the map $E$ on $R$ composed by the embedding of $R_{-1}$ into $R_{0}$.
Then for $k\geq 0$, we define the map $E_{k}:R_{k}\to R_{k+1}$ as follows: $E_{k}(r'+a)=E_{k-1}(r').t^a$, where $r'\in R_{k-1}$ and $a\in A_{k}$. Since $R_{k-1}$ and $A_k$ are in direct summand this is well-defined. Moreover for $k\leq \ell$, $E_{\ell}$ extends $E_k$.
\par Finally let $R[\X]^E:=\bigcup_{k\geq 0} R_{k}$ and extend $E$ on $R[\X]^E$ by defining  for $f\in R_k$, $E(f):=E_k(f)$.

\medskip
\par Using the construction of $R[\bold{X}]^E$ as an increasing union of group rings, one can define on the elements of $R[\X]^E$ an analogue of the degree function for ordinary polynomials which measures the complexity of the elements; it will take its values in the class $On$ of ordinals and was described for instance in \cite[1.9]{D} or in \cite[1.8]{M}.
\medskip
\par For $p\in R[\X]$, let us denote by $totdeg_{\bold{X}}(p)$ the total degree of $p$, namely 
the maximum of $\{\sum_{j=1}^n i_{j}\colon$ for each monomial $X_{1}^{i_{1}}\ldots X_{n}^{i_{n}}$ occurring (nontrivially) in $p\}$.

\par Then one defines a height function $h$ (with values in $\IN$) which detects at which stage of the construction the (non-zero) element is introduced.
\par Let $p(\X)\in R[\X]^E$, then $h(p(\X))=k$, if $p\in R_{k}\setminus R_{k-1}$, $k>0$ and $h(p(\X))=0$ if $p\in R[\X]$. 
Using the freeness of the construction, one defines a function $rk$
\[
rk: R[\X]^E\to \IN:
\]
\par If $p=0$, set $rk(p):=0$, 
 \par if $p\in R[\X]\setminus\{0\}$, set $rk(p):=totdeg_{X}(p)+1$ and 
 \par if $p\in R_{k}\setminus R_{k-1}$, $k>0$, let $p=\sum_{i=1}^d r_{i}.E(a_{i})$, where $r_{i}\in R_{k-1}$, $a_{i}\in A_{k-1}\setminus\{0\}$. Set
$rk(p):=d$.
\par Finally, one defines the complexity function $ord$
\[
ord: R[\X]^E\to On
\]  as follows. For $k\geq 0$, write $p\in R_k$ as $p=\sum_{i=0}^k p_{i}$ with $p_0\in R_0$, $p_{i}\in A_{i}$, $1\leq i\leq k$. Define $ord(p):=\sum_{i=0}^k \omega^i.rk(p_{i})$.
\rem \label{order} \cite[Lemma 1.10]{D} Let $p\in R_k\setminus R_{k-1}$, $k\geq 1$ and assume that $p=\sum_{i=1}^k p_{i}$ with $p_{i}\in A_{i}$, $1\leq i\leq k$. Then there is $q\in R[\X]^E$ such that $ord(E(q).p)<ord(p)$.
\erem
\medskip
\par Note that if we had started with a partial $E$-ring $(R,E)$, then $R[\bold{X}]^E$ can be endowed with the structure of a partial $E$-ring. Indeed, let $f\in R_k$, written as $f_0+f_1$, with $f_0\in A(R)$ and $f_1\in A_0\oplus A_1\oplus\cdots \oplus A_{k}$, then define $E(f)$ as $E(f_0).t^{f_1}\in R_{k+1}$, $k\geq -1$. 
\medskip
\par Finally let us recall the definition of the closure operator $\Ecl$. 
 
\nota Given $f_1,\ldots,f_n\in R[\X]^E$, $\bar f:=(f_1,\ldots,f_n)$, we will denote by  $J_{\bar f}(\X)$, the 
Jacobian matrix  $\left( \begin{array}{ccc}
     \partial_{X_1} f_1 & \cdots & \partial_{X_n} f_1 \\
      \vdots & \ddots & \vdots\\
      \partial_{X_1} f_n & \cdots & \partial_{X_n} f_n  
      \end{array} \right)$
and by $\det(J_{\bar f}(\X))$ its determinant; note that $\det(J_{\bar f}(\X))$ is an $E$-polynomial.
\enota
\dfn \label{ecl} \cite[Definition 3.1]{K} Let $(B,E)\subset (R,E)$ be partial E-domains. A {\it Khovanskii system over $B$} is a quantifier-free $\L_E(B)$-formula in $\x:=(x_1,\ldots,x_n)$ of the form 
$$H_{\bar f}(\x):=\bigwedge_{i=1}^n f_i(\x)=0\;\wedge\; \det(J_{\bar f}(\x))\neq 0,$$ for some $f_1,\ldots,f_n\in B[\X]^E.$
\par Let $a\in R$. Then $a\in \Ecl^R(B)$ if  $H_{\bar f}(a_1,\ldots,a_n)$ holds for some $a_2,\ldots,a_n\in R$ with $a=a_1$, where $H_{\bar f}$ is a Khovanskii system, $f_1,\ldots,f_n\in B[\X]^E$ (assuming that $a_i\in A(R)$, $1\leq i\leq n$, if necessary for the $f_i's$ to be defined).
\edfn

\par The operator $\Ecl$ was introduced in \cite{M} and then used by A. Wilkie in his proof of the model-completeness of the field of reals with the exponential function \cite{W}. 
\par Then in a general algebraic framework, J. Kirby showed that $\Ecl$ coincides with another closure operator defined using $E$-derivations \cite[Propositions 4.7, 7.1]{K} and used that property in order to prove that $\Ecl$ satisfies the exchange property \cite[Theorem 1.1]{K}. So the operator $\Ecl$ induces a pregeometry on subsets of $R$ and we get a dimension, that we denote by $\dim$.
\subsection{Hilbert rings} \label{Hilbert} $\;$

\par The Jacobson radical $J(R)$ of a ring $R$ is by definition the intersection of all maximal ideals of $R$. It is a definable subset of $R$, namely $J(R)=\{u\in R:\;\forall z\exists y\;(1+u.z).y=1\}$. Given an ideal $I$ of $R$, we denote by $J(I)$ the intersection of all maximal ideals that contain $I$, so this is equal to $\{u\in R:\;\forall z\exists y\;(1+u.z).y-1\in I\}.$ We denote by $rad(I)$ the intersection of all prime ideals of $R$ that contain $I$ and one shows that $rad(I)=\{u\in R:\; \exists n\in \IN\;u^n\in I\}$.
\par A Hilbert ring (also called Jacobson ring) is a ring $R$ where any prime ideal is the intersection of maximal ideals. Therefore for any ideal $I$, $rad(I)=J(I)$.
\par The terminology of {\it Hilbert ring} comes from the fact that in a Hilbert ring $R$, Hilbert Nullstellensatz holds, namely if a polynomial vanishes at every zero of an ideal $I$, then $f\in rad(I)$.
\par Using a result of J. Krempa and J. Okninski, one can observe that the ring of exponential polynomials is not an Hilbert ring (see Corollary \ref{notH} below). But let us quickly recall in the case of polynomial rings a way to show Hilbert Nullstellensatz \cite{Gol}.

Given a maximal ideal $M$ of $K[X]$, one first shows that the field $K[X]/M$ is algebraic over $K$. Then using that $K[\X]$ is a Hilbert ring, one proves by induction on $n\geq 1$, that $K[\X]/M$ is algebraic over $K$, for $M$ a maximal ideal of $K[\X]$ \cite[Corollary to Theorem 5]{Gol}. 
The following properties are used:
\par (0) If $K$ is a field, then $K[\X]$ is a Hilbert ring \cite[Corollary to Theorem 3]{Gol}.
\par (1) If $R$ is a Hilbert ring, then for any ideal $I$ of $R$, then $R/I$ is also a Hilbert ring \cite[Theorem 1]{Gol}.
\par (2) $R$ is a Hilbert ring if and only if every maximal ideal of the polynomial ring $R[X]$ contracts to a maximal ideal of $R$ \cite[Theorem 5]{Gol}. 
\medskip
\par Let $(K,E)$ be now an exponential field. We will show that $K[\X]^E$ is not an Hilbert ring, and in the next section that a weak form of the algebraicity property mentioned above, holds in the exponential setting (see Lemma \ref{onevariable} and Remark \ref{nvariable}).

\medskip
\par In the following, $G$ denotes a torsion-free abelian group; the {\it torsion-free rank} of $G$ is the dimension of the $\IQ$-vector space $G\otimes \IQ$.

\par Following a result of Krull for polynomial rings, one can show \cite[Proposition 1]{KO}, that if $F$ is a field and $G$ a group of torsion-free rank $\alpha\geq \omega$, then the group ring $F [G]$ is a Hilbert ring if and only if $\vert F\vert>\alpha$. When $F$ is not a field, a necessary and sufficient condition was obtained by Krempa and Okninski \cite[Theorem 4]{KO}. 
Let us state below part of their result.
\thm \cite[Theorem 4]{KO} Let $G$ be an abelian group of infinite torsion-free rank $\alpha$ and let $R$ be a ring. If $R[G]$ is a Hilbert ring, then all homomorphic images of $R$ have cardinality exceeding $\alpha$.
\ethm
\cor \label{notH} The $E$-ring $K[\X]^E$ is not a Hilbert ring.
\ecor
\pr Let $K$ be an $E$-field and consider the group rings $K_{\ell}$, $\ell\geq 1$. Recall that $K_{\ell}\cong K_{0}[t^{A_{0}\oplus\cdots\oplus A_{\ell}}]$ (see section \ref{free}) and $K[\X]^E\cong K_{0}[t^{\bigoplus_{\ell\geq 0} A_{\ell}}]$. Let $\alpha_{\ell}$ be the torsion-free rank of the (multiplicative) group $t^{A_{0}\oplus\cdots\oplus A_{\ell}}$. Then all the homomorphic images of $K_{0}$ do not have cardinality $>\alpha_{\ell}$ and so the first condition of Theorem 4 in \cite{KO} fails and so $K_{\ell}$ is not a Hilbert ring. A similar reasoning applies for $K[\X]^E$.
\qed
\medskip
\par To conclude this section let us recall the following observation of  P. D'Aquino,  A. Fornasiero, G. Terzo \cite{DFT}. Let $(\IC,exp)$ be the field of complex numbers endowed with the classical exponential function $exp$. 
\rem \label{terzo} Let $c\in \IC\setminus exp(2\pi\IZ)$. Consider $f(X)=exp(X)-c$ and $g(X)=exp(iX)-1$. Let $I$ be the $E$-ideal in $\IC[X]^E$ generated by $f$ and $g$. Then even though $I$ is a nontrivial ideal of  $\IC[X]^E$, $f$ and $g$ have no common zero in $\IC$ (or in any pseudo-exponential field (as defined by B. Zilber) containing $\IC$) \cite{DFT}. 
\erem

\subsection{Exponential ideals} 
\dfn Let $(R,E)$ be a partial $E$-ring.
\par An $E$-ideal $J$ of $(R,E)$ is an ideal such that for any $h\in J\cap A(R)$, $E(h)-1\in J$. 
\par An $E$-ideal $J$ is prime if $R/J$ is a domain.
\par Given a subset $A$ of $(R,E)$, we denote by $\langle A\rangle$ the ideal generated by $A$ and by $\langle A\rangle^E$ the smallest $E$-ideal containing $A$.
\edfn
An example of an $E$-ideal is the set of $E$-polynomials in $R[\bold{X}]^E$ which vanish on a subset of $R^n$. When $R$ is a field, such ideal is a prime $E$-ideal.

\medskip 
\par Let $J$ be an $E$-ideal of $(R,E)$. Then the ring $R/J$ can be endowed with an exponential function $E_{J}:R/J\rightarrow R/J$ sending $g+J$ to $E(g)+J$, $g\in A(R)$. This is well-defined since for $h,\;h'\in J\cap A(R)$, $E(h)-E(h')=(E(h-h')-1).E(h')\in J$.
\medskip
\par The following lemma is well-known. For convenience of the reader we give a proof below.
\lem \label{der} Let $(F_0,E)\subset (F,E)$ be two partial $E$-fields. 
 Assume that $c\in F$ is such that there is an $E$-polynomial  
$p(X)\in F_0[X]^E$ such that $p(c)=0$ ($X$ a single variable). Then $c\in \Ecl(F_{0})$.
\elem
\pr Recall that in section \ref{free}, we have defined the ring of exponential polynomials $F_{0}[X]^E$ by induction setting $R_{0}=F_0[X]=F_{0}\oplus A_{0}$ and $R_{i}:=R_{i-1}[t^{A_{i-1}}]=R_{i-1}\oplus A_{i}$, $i>0$. 
\par W.l.o.g. we may assume that $ord(p)$ is minimal such that $p(c)=0$. Write $p$ as: $p=p_{0}+\sum_{i=1}^k p_{i}$, with $p_{0}\in F_{0}[X]$ and $p_{i}\in A_{i}$, $i>0$.
 \par First let us note that $p_{0}$ is non trivial. Suppose otherwise that $p_{0}= 0$, then by Lemma 1.10 in \cite{D} (see also Remark \ref{order}), there exists $q\in F_{0}[X]^E$ such that $ord(E(q) p)<ord(p)$. Since if $p(c)=0$, then $E(q(c)) p(c)=0$, we get a contradiction.  We may assume further that $p_{0}$ is a monic polynomial.
\par Denote by $\partial_{X} p$ the partial derivative of $p$ with respect to the variable $X$.
Since $p_0\neq 0$, $ord(\partial_X p)< ord(p)$ \cite[Lemma 3.3]{D}. 
So, by choice of $p$, $\partial_X p(c)\neq 0$.  So $u\in \Ecl^{F_{0}}(F)$.
\qed

\medskip

\lem \label{onevariable} Let $\Ps$ be a prime $E$-ideal of $K[X]^E$ and let  $F$ be the fraction field of $K[X]^E/\Ps$.
Then $F$ is included in $\Ecl^{F}(K).$
\elem
\pr Let $u:=X+\Ps\in K[X]^E/\Ps$. Then let $p(X)\in \Ps$ an element of minimal order, so we have $p(u)=0$ and w.l.o.g. we may assume that $p$ is monic.
Consider the partial $E$-field extension $F$ of $K$. Then by Lemma above, $u\in \Ecl^{F}(K)$. Since, $\Ecl^{F}(K)$ is a partial $E$-subfield containing $u$ \cite[Lemma 3.3]{K}, it contains $F$.
\qed 

\rem \label{nvariable} More generally, letting $\Ps$ be a prime $E$-ideal of $K[\X]^E$, then $\dim(K[\X]^E/\Ps)<n$ \cite[Corollary 3.8]{PR}. The proof of that last result uses a theorem of Macintyre \cite[Theorem 15 and Corollary]{M}. 
\erem
\subsection{Group rings and augmentation ideals} Now let $S_{0}$ be any ring of characteristic $0$ (not necessarily a polynomial ring) and let $G$ be a torsion-free abelian group. 
\dfn \label{aug-def} We consider the group ring $S_1:=S_{0}[G]$ 
and we define a map $\phi^a$ from $S_{1}\rightarrow S_{0}: \sum_{i} r_{i}.g_{i}\rightarrow \sum_{i} r_{i},$ with $g_{i}\in G$, $r_{i}\in S_{0}$. The kernel of the map $\phi^a$ is called the {\it augmentation ideal} of $S_1$. 
\edfn
Recall that the augmentation ideal is generated by elements of the form $g-1$, $g\in G$ (write $\sum_{i} r_{i}.g_{i}$ as $\sum_{i} r_{i}.(g_{i}-1)+\sum_{i} r_{i}$). 

\section{Exponential ideals and a Nullstellensatz}
\subsection{Embedding an $E$-ideal into a maximal ideal that is an $E$-ideal}

\nota \label{aug-nota} Let $B_{0}$ be a ring of characteristic $0$, let $G$ be a torsion-free abelian group and 
let $B_1$ be the group ring $B_{0}[G]$. Let  $I$ be an ideal of $B_{0}$.
Then compose the augmentation map $\phi^a: B_{1}\rightarrow B_{0}$ with the map sending $B_{0}$ to $B_{0}/I$. Denote the composition of these two maps: $\phi^a_{I}$ and denote by $I_1$ the kernel of $\phi^a_I$ in $B_1$.
\enota
\lem  \label{aug} Let $B_{1}=B_0[G]$ be the group ring $B_{0}[G]$. Let $I$ be an ideal of $B_0$ and let $\phi^a$, $\phi^a_I$ and $I_1$ as in Notation \ref{aug-nota}.
Then,
\begin{enumerate} 
\item $I_1\cap B_0=I$,
\item if $I$ prime, then $I_1$ prime,
\item if $I$ maximal, then $I_1$ maximal.
\end{enumerate}\qed
\elem

\medskip

\par Now we will place ourselves in the group rings $R_n$, $n\geq 1$, defined in $2.1$, assuming that $(R,E)$ is an $E$-field and keeping the same notations as in subsection $2.1$. (In particular all ideals of $R_n$ are, as additive groups, $\IQ$-vector spaces.) Given an ideal of $R_n$, we want to find a natural condition under which we can extend it to an exponential ideal of $R[\X]^E$. We do it by steps, using the above lemma. In order to extend a proper ideal $I_n$ of $R_n$ to a proper ideal of $R_{n+1}$, we will modify the augmentation ideal map ''along $I_{n}$''. 
We will require on $I_n$ the following property:
\[(\Ext)_n\;\;
u\in I_n\cap R_{n-1}\Rightarrow E(u)-1\in I_n.
\]  
Note that if $I_n$ embeds in an $E$-ideal $I$ with the property that $I \cap R_n=I_n$, then $I_n$ has the property $(\Ext)_n$.
\nota\label{summand} 
Recall that $R_n=R_{n-1}\oplus A_n$ and denote by $\pi_{A_n}$ (respectively by $\pi_{R_{n-1}}$) the projection on $A_n$ (respectively on $R_{n-1}$). Recall also that $I_n\cap R_{n-1}$ is a divisible abelian subgroup of $I_{n}$.  Therefore $I_{n-1}:=I_n\cap R_{n-1}$ has a direct summand $\tilde I_n\subset I_{n}$ in $I_n$,
namely
\[ I_n=\tilde I_n\oplus I_{n-1}
\]
Note that $\pi_{A_n}$ is injective on  $\tilde I_n$:
let $u, v\in \tilde I_n$, and write $u=u_0+u_1$, $v=v_0+v_1$, with $u_0,v_0\in R_{n-1}$ and $u_1,v_1\in A_n$. Suppose $u_1=v_1$, then $u-v=u_0-v_0\in R_{n-1}\cap I_{n}\cap \tilde I_n=I_{n-1}\cap \tilde I_{n}=\{0\}$, consequently $u=v$.
\par Let 
\[ A_n=\pi_{A_n}(\tilde I_{n})\oplus \tilde A_n.
\]
\enota
In the statement of the following lemma, we will use Notation \ref{summand}. 
\begin{lemma}\label{aug+}  Let $I_{n}$ be a proper ideal of $R_{n}$ and let $u\in R_n[t^{A_n}]$, then $u$ can be rewritten in a unique way as 
\[\sum_{i} r_iE(u_i)
\]
where $r_i\in R_n$, $u_i\in \tilde I_n\oplus \tilde A_n$, and for $i\neq j$, $u_i\neq u_j$. In other words, the group ring $R_n[t^{A_n}]$ is isomorphic to the group ring $R_{n}[E(\tilde I_n\oplus \tilde A_n)]$.
\end{lemma}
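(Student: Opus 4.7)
The plan is to translate the standard group-ring expansion $u=\sum_i r_i t^{a_i}$ of $u\in R_n[t^{A_n}]$ (with distinct $a_i\in A_n$ and $r_i\in R_n$) into the desired form by pushing each exponent through the decomposition $A_n=\pi_{A_n}(\tilde I_n)\oplus \tilde A_n$ of Notation \ref{summand}. For each $i$, write $a_i=\tilde u_i+\tilde a_i$ uniquely with $\tilde u_i\in\pi_{A_n}(\tilde I_n)$ and $\tilde a_i\in\tilde A_n$. Since $\pi_{A_n}$ is injective on $\tilde I_n$, there is a unique $v_i\in\tilde I_n$ with $\pi_{A_n}(v_i)=\tilde u_i$; writing $v_i=w_i+\tilde u_i$ with $w_i\in R_{n-1}$, the defining recursion $E(r'+a)=E(r')\cdot t^a$ for $r'\in R_{n-1}$, $a\in A_n$ gives
$$E(v_i+\tilde a_i)=E(w_i)\cdot t^{\tilde u_i+\tilde a_i}=E(w_i)\cdot t^{a_i}.$$
Since $E(w_i)$ is a unit of $R_n$, we may solve for $t^{a_i}$ and substitute to obtain the representation $u=\sum_i r'_i E(u_i)$ with $u_i:=v_i+\tilde a_i\in\tilde I_n\oplus\tilde A_n$ and $r'_i:=r_i\,E(w_i)^{-1}\in R_n$.

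For uniqueness, suppose $\sum_j s_j E(u_j)=0$ with pairwise distinct $u_j\in\tilde I_n\oplus\tilde A_n$, and decompose each $u_j=v_j+\tilde a_j$ as above, with $v_j=w_j+\tilde u_j$. The key observation is that the assignment $u_j\mapsto \tilde u_j+\tilde a_j\in A_n$ is injective: if $\tilde u_j+\tilde a_j=\tilde u_k+\tilde a_k$, the direct-sum decomposition of $A_n$ forces $\tilde u_j=\tilde u_k$ and $\tilde a_j=\tilde a_k$; injectivity of $\pi_{A_n}$ on $\tilde I_n$ then yields $v_j=v_k$ and hence $u_j=u_k$. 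Thus the relation becomes $\sum_j s_j E(w_j)\,t^{\tilde u_j+\tilde a_j}=0$ with pairwise distinct exponents in $A_n$; the unique-monomial property of the group ring $R_n[t^{A_n}]$ gives $s_j E(w_j)=0$, and since $E(w_j)$ is a unit, $s_j=0$.

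The group-ring isomorphism $R_n[t^{A_n}]\cong R_n[E(\tilde I_n\oplus\tilde A_n)]$ then drops out: the existence step shows $R_n[t^{A_n}]$ is contained in the $R_n$-subalgebra generated by $E(\tilde I_n\oplus\tilde A_n)$, the reverse inclusion is immediate from the recursion for $E$, and the uniqueness step says the $R_n$-linear map from the abstract group ring on $\tilde I_n\oplus\tilde A_n$ sending $g$ to $E(g)$ sends pairwise distinct group elements to $R_n$-linearly independent elements, so it is an isomorphism onto the subring in question. I expect the only delicate point to be bookkeeping, namely ensuring the three summands $R_{n-1}$, $\pi_{A_n}(\tilde I_n)$ and $\tilde A_n$ interact cleanly with the recursive definition of $E$; once that is unpacked, the entire statement collapses to the unique-monomial property of $R_n[t^{A_n}]$ together with the injectivity of $\pi_{A_n}$ on $\tilde I_n$.
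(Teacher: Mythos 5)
Your proof is correct and follows essentially the same route as the paper's: both decompose each exponent $a_i$ along $A_n=\pi_{A_n}(\tilde I_n)\oplus\tilde A_n$, lift through the injectivity of $\pi_{A_n}$ on $\tilde I_n$, and use the recursion $E(r'+a)=E(r')\cdot t^a$ to trade $t^{a_i}$ for $E(u_i)$ up to the unit $E(w_i)\in R_n$ (the paper's $E(f_{i0})$). Your uniqueness step is simply a more explicit rendering of the paper's one-line appeal to the injectivity of $\pi_{A_n}$ on $\tilde I_n\oplus\tilde A_n$.
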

\begin{proof} Let $u=\sum_{i} r_i.t^{a_i}\in R_n[t^{A_n}]$, where $r_i\in R_n$ and $a_i\in A_n$. Decompose $a_i$ as 
\[
a_{i0}+a_{i1}
\]
with $a_{i0}\in \pi_{A_n}(\tilde I_n)$ and $a_{i1}\in \tilde A_n$. Since $\pi_{A_{n}}$ is injective on $\tilde I_{n}$, there exists a unique $f_i\in \tilde I_n$ such that $a_{i0}=\pi_{A_{n}}(f_i)$ and so $f_i=\pi_{R_{n-1}}(f_{i})+a_{i0}$. Set $f_{i0}:=\pi_{R_{n-1}}(f_{i})$.

 We have $E(f_i)=E(f_{i0}).t^{a_{i0}}$ (since $E(a_{i0})$ as been defined as $t^{a_{i0}}$ (see section \ref{free})), and $t^{a_i}=t^{a_{i0}}.t^{a_{i1}}=E(-f_{i0}).E(f_i).t^{a_{i1}}$. Observe that both $E(-f_{i0})\in R_n$, $E(f_{i0})\in R_{n}$ and $E(f_{i})\in R_{n}[t^{A_{n}}]$. Moreover, since $a_{i1}\in A_{n}$, $t^{a_{i1}}=E(a_{i1})$.
 
 So we may re-write $u$ as 
 \[\sum_{i} (r_i E(-f_{i0})) E(f_i) t^{a_{i1}}=\sum_{i} (r_i E(-f_{i0})) E(f_i+a_{i1})\]
  with $r_i E(-f_{i0})\in R_{n}$ and $f_i+a_{i1}\in \tilde I_n\oplus \tilde A_n$. Such expression is unique since the projection $\pi_{A_{n}}$ on  $\tilde I_n$ is injective: for $f\neq g\in \tilde I_n$, $\pi_{A_{n}}(f)\neq \pi_{A_{n}}(g)\in A_n$. So if $u_{i}\neq u_{j}\in \tilde I_n\oplus \tilde A_n$, then $\pi_{A_{n}}(u_{i})\neq \pi_{A_{n}}(u_{j})$.
\end{proof}

\prop \label{E} Let $n\in \IN$ and $I_n$ be a proper ideal of $R_{n}$ with the property $(\Ext)_{n}$. Then, $I_n$ embeds in a (proper) ideal $I_{n+1}$ of $R_{n+1}$ such that 
\begin{align*}
(\Ext)_{n+1}\;\; \;&E(f)-1\in I_{n+1} \;{\rm for\; any\;} f\in I_n,\;  \;{\rm and}\\
(\Tr)_{n+1}\;\;\;&I_{n+1}\cap R_n=I_n.
 \end{align*}
Moreover if $I_{n}$ is prime (respectively maximal), then $I_{n+1}$ is prime (respectively maximal).
\eprop 
\pr Let $I_n$ be a proper ideal of $R_{n}$, $n\in \IN$ (in particular $I_{n}\cap R_{-1}=\{0\}$). By the preceding lemma, any $u\in R_{n+1}$ 
can be rewritten in a unique way as $\sum_{i=1}^{\ell} r_i E(u_i)$, where $r_i\in R_{n},\;u_i\in \tilde I_n\oplus \tilde A_n$, and the $u_i$'s are distinct. 
So the map $\phi$ sending $\sum_{i=1}^{\ell} r_i E(u_i)$ to $\sum_{i=1}^{\ell} r_i\in R_n$ is well-defined and it is a ring morphism from $R_n[E(\tilde I_n\oplus \tilde A_n)]$ to $R_n$. 

The kernel $ker(\phi)$ contains $\{E(f)-1:\;f\in \tilde I_n\}$. Let $\phi_{I_n}$ be the map sending $\sum_{i=1}^{\ell} r_i E(u_i)$ to $\sum_{i=1}^{\ell} r_i+I_n\in R_n/I_n$. 

Define $I_{n+1}$ as $ker(\phi_{I_n})$. By Lemma \ref{aug}, $I_{n+1}$ is an ideal of $R_{n+1}$ with the property that $ker(\phi_{I_n})\cap R_n=I_n$ $(\Tr)_{n+1}$.
\par It remains to show that $I_{n+1}$ contains $E(f)-1$ for any $f\in I_n$  $(\Ext)_{n+1}$.
 Let $f\in I_n$ and write $f$ as $f_0+f_1$ with $f_0\in I_{n-1}$ and $f_1\in \tilde I_n$. 
 
 Then $E(f)-1=(E(f_1)-1).E(f_0)+(E(f_0)-1)$.  We already observe that $E(f_1)-1\in ker(\phi)$ and by assumption $(\Ext)_{n}$, $E(f_0)-1\in I_n$ for $f_0\in I_{n-1}$.  So, $E(f)-1\in ker(\phi_{I_n})$.
 \par Applying Lemma \ref{aug} with $S_0=R_n$ and $G=E(\tilde I_n\oplus \tilde A_n)$, if $I_{n}$ is prime (respectively maximal), then $I_{n+1}$ is prime (respectively maximal).
 \qed

\cor \label{E-E} Let $I_{n_{0}}$ be a proper ideal of $R_{n_{0}}$, $n_{0}\geq 0$, with the property $(\Ext)_{n_{0}}$. Then, $\langle I_{n_0}\rangle^E$ 
 is such that $\langle I_{n_0}\rangle^{E}\cap R_{n_{0}}=I_{n_{0}}$. Moreover if $I_{n_{0}}$ is prime (respectively maximal), then $\langle I_{n_0}\rangle^{E}$ is prime (respectively maximal).
\ecor
\begin{proof} Proposition \ref{E} allows to construct a proper (respectively prime, maximal) ideal $I_{n_{0}+1}$ of $R_{n_0+1}$ containing $E(I_{n_{0}})-1$ and satisfying $(\Tr_{n_0+1})$. Therefore we may reiterate the construction. Then let $I^E:=\bigcup_{n\geq n_{0}} I_n$. It is an $E$-ideal by construction, and it is proper because  for all $n\geq n_{0}$, $I^E\cap R_{n}=I_n$. If $I_{n_{0}}$ is prime, then each $I_n$ is prime for $n\geq n_0$ (by Proposition \ref{E}). So $\langle I_{n_0}\rangle^{E}$ is prime as the union of a chain of prime ideals. 
If $I_{n_0}$ is maximal, then for $n\geq n_{0}$, each $I_{n}$ is maximal by Proposition \ref{E}. So $\langle I_{n_0}\rangle^{E}$ is maximal as the union of a chain of maximal ideals.
\end{proof}
 \medskip
 \par G. Terzo in her thesis asked several questions on $E$-ideals in $R_0[X]$ \cite{T}. In particular whether the $E$-ideal generated by an irreducible polynomial is a prime ideal. (By irreducible $E$-polynomial we mean an $E$- polynomial which cannot be written as a product of two non invertible elements in $R[X]^E$.)
 \rem \label{terzopage18} Let $p(X)\in R_{0}$ and suppose $p(X)$ is irreducible. Since the ideal generated by $p(X)$ in $R_{0}$ is a maximal ideal, by the above corollary, $\langle p(X)\rangle^E$ is a maximal ideal.
 \erem
 \medskip
 \par A natural question is when an ideal $I\subset R_{n}$ satisfies the condition $(\Ext)_n$. Assume that $(R,E)$ has an extension $(S,E)$ where there is a tuple of elements $\bar \alpha$ with the property that for any $f\in I$, $f(\bar \alpha)=0$. Then consider $I_{\bar \alpha}:=\{g\in R_{n}:\;g(\bar \alpha)=0\}$. By definition $I\subset I_{\bar \alpha}$, $I_{\bar \alpha}$ is a (prime) ideal and for any $f\in I\cap R_{n-1}$, $E(f)-1\in I_{\bar \alpha}$.
 
 \medskip
\par In the following proposition, we will examine the condition $(\Ext)_{1}$ that we put on an ideal $I$ of $R_{1}$ in order to embed it in an $E$-ideal. (This corresponds to the case of $E$-polynomials with only one iteration of $E$.)
In this particular case, we can use the fact that $R_{0}$ is a Noetherian ring.

\prop \label{R_{1}} Let $I$ be a proper ideal of $R_{1}$. Then we can embed $I$ in a proper $E$-ideal of $R[\X]^E$.
\eprop

\pr It suffices to show that we can embed $I$ in an ideal $J$ of $R_1$ such that for any $f\in J\cap R_0$, $E(f)-1\in J$ and use Corollary \ref{E-E}.
Since $I$ is a proper ideal of $R_{1}$, $I\cap R_{-1}=\{0\}$. Set $I_{0}:=R_{0}\cap I$. Using Notation \ref{summand}, we get that $\tilde I_0=I_0$.
Recall that $\tilde A_{0}$ is a direct summand of  $\pi_{A_0}(I_{0})$ in $A_{0}$. By Lemma \ref{aug+}, $R_1\cong R_0[E(I_0\oplus \tilde A_0)]$.
\par Set $R_0':=R_0[E(\tilde A_0)]$, so we get $R_1\cong R_0'[E(I_0)]$.
Consider the ideal $I_{0}':=I\cap R'_0$. (Note that $I_{0}=I_{0}'\cap R_0$, since $R_0'\cap R_0=R_0$.)
Let $u\in R_1$ and write it as $\sum_{j} r_{j}.E(u_j)$, where $r_{j}\in R_{0}',\;u_j\in I_0$, with the $u_j$'s distinct. 
The map $\phi^+$ sending $u$ to $\sum_{j=1}^{\ell} r_{j}\in R_{0}'$ is well-defined. 
Define $J$ as $ker(\phi_{I_0'}^+)$,
 then $J$ is an ideal of $R_{1}$ containing $E(f)-1$ for any $f\in I_{0}$ with the property that $J\cap R_{0}'=I_{0}'$ by Lemma \ref{aug}. (It implies that $J\cap R_{0}=J\cap R_{0}'\cap R_0=I_{0}'\cap R_0=I_{0}$. 
\par  Let $u\in \langle J, I\rangle\cap R_{0}'$; then $u=\sum_{i} u_{i}a_{i}$ with $u_{i}\in J, a_{i}\in I$. Since $u\in R_{0}'$, we have that $\phi^{+}(u)=u$. So $u=\sum_{i} \phi^{+}(u_{i}).\phi^{+}(a_{i})$. But $\phi^{+}(u_{i})\in I_{0}'$ and so since $\phi^{+}(a_{i})\in R_{0}'$ and $I_{0}'$ is an ideal, we get that $u\in I_{0}'$. In particular $\langle J, I\rangle$ is proper.
\par  So we repeat the same procedure replacing $I_0$ by $\langle J, I\rangle\cap R_0$. Since $R_0$ is noetherian, the process will stop. So we get a proper ideal $\tilde J$ containing $I$ with the property that for any $f\in \tilde J\cap R_0$, $E(f)-1\in \tilde J.$ So we may apply Corollary \ref{E-E} and embed $\tilde J$ in an exponential ideal of $R[X]^E$.
 \qed 

\subsection{Rabinowitsch's trick}
Recall that Rabinowitsch's trick corresponds to the introduction of an extra variable, allowing one to deduce the algebraic strong Nullstellensatz from the weak one.
Given $f_1(\X),\ldots, f_m(\X)\in R[\X]$ and another polynomial $g(\X)\in R[\X]$ vanishing on all common zeroes of $f_{1},\ldots, f_{m}$, introducing the new variable $Y$, one gets: $f_1,\ldots, f_m, 1-Yg$ do not have any common zeroes.

By the weak Nullstellensatz, this implies that the ideal generated by these polynomials is not proper. So one expresses (by an equality) that $1$ belongs to the ideal generated by $f_1,\ldots, f_m$ and $ 1-Y.g$. 
Then one substitutes $g^{-1}$ to $Y$ in the equality, and clears denominators. This entails that some power of $g$ belongs to the ideal generated by $f_{1},\ldots, f_{m}$ in $R[\X]$.

To mimick Rabinowitsch's trick in $R[\X]^{E}$, where $(R,E)$ is as previously an $E$-field, we proceed as follows introducing a ''non-$E$'' variable, extending $R[\X]^E$ to $R[\bold{X}]^E\otimes_{R} R[Y]$. This partial $E$-ring is isomorphic to the following chain of partial $E$-rings.
Recall that $R_{-1}:=R$ and $R_{0}:=R[\X]$. 
Denote by $S_{0}:=R[\bold{X},Y]=R_{0}[Y]\cong R_{0}\otimes_{R} R[Y]$.
 Let $S_{1}:=S_{0}[t^{A_{0}}]\cong R_{1}\otimes_{R} R[Y]$, 
and by induction on $n\geq 2$, let $S_{n}=S_{n-1}[t^{A_{n-1}}]\cong R_{n}\otimes_{R} R[Y]$. Let $\Se:=\bigcup_{n\geq 0} S_{n}$. Then $\Se\cong R[\bold{X}]^E\otimes_{R} R[Y]$.
\par Now we consider an ideal $J$ of $R[\bold{X}]^E\otimes_R R[Y]$ 
and we want to extend it  into an $E$-ideal, namely an ideal containing $E(f)-1$, for $f\in J\cap R[\bold{X}]^E$. 
\prop \label{E-Y} Let $J_n$ be a proper ideal of $S_{n}$, let $I_n:=J_{n}\cap R_{n}$. Suppose that $I_n$ satisfies $(\Ext)_n$ as an ideal of $R_n$.
Then $J_n$ embeds into a proper ideal $J_{n+1}$ of $S_{n+1}$ such that $I_{n+1}:=J_{n+1}\cap R_{n+1}$ satisfies $(\Ext)_{n+1}$ and $(\Tr)_{n+1}$.
Moreover if $J_{n}$ is prime (respectively maximal), then $J_{n+1}$ is prime (respectively maximal).
\eprop
\pr By Lemma \ref{aug+}, $R_{n}[t^{A_{n}}]\cong R_{n}[E(\tilde I_{n}\oplus \tilde A_{n})]$ (see Notation \ref{summand}). Since $S_{n+1}=S_{n}[t^{A_{n}}]\cong R_{n}[t^{A_{n}}]\otimes_{R}R[Y]$, we get that $S_{n+1}\cong S_{n}[E(\tilde I_{n}\oplus \tilde A_{n})]$.
 Rewrite $s\in S_{n+1}$ as $\sum_{i=1}^{\ell} s_i.E(u_i):\;s_i\in S_{n},\;u_i\in \tilde I_n\oplus \tilde A_n$, with $u_i$ distinct. Let $\phi$ be the map sending $\sum_{i=1}^{\ell} s_i.E(u_i)$ to $\sum_{i=1}^{\ell} s_i\in S_n$ and let $\phi\restriction R_{n+1}$ be the restriction of the map $\phi$ to $R_{n+1}$. 
 Let $\phi_{J_n}$ be the map sending $\sum_{i=1}^{\ell} s_i.E(u_i)$ to $\sum_{i=1}^{\ell} s_i+J_n\in S_n/J_n$. Define $J_{n+1}$ as the kernel of $\phi_{J_n}$; by Lemma \ref{aug}, it is an ideal of $S_{n+1}$ and $ker(\phi_{J_n})\cap S_n=J_n$. Furthermore $ker(\phi_{J_{n}})$ contains $ker((\phi\restriction R_{n+1})_{I_{n}})$ and so by Proposition \ref{E}, it contains $E(f)-1$ for any $f\in I_n$.
\par Applying Lemma \ref{aug} with $B_0=S_n$ and $G=E(\tilde I_n\oplus \tilde A_n)$, if $J_{n}$ is prime (respectively maximal), then $J_{n+1}$ is prime (respectively maximal).
 \qed
 
\cor \label{E-Y-max} Let $J$ be a proper ideal of $S_{n}$ with $n\geq 0$ chosen minimal such. Assume that $J\cap R_{n}$ satisfies the property $(\Ext)_{n}$ (as an ideal of $R_{n}$).
Then $\langle J\rangle^E$ is a proper $E$-ideal  of $S[\X]^E$ with $\langle J\rangle^{E}\cap S_{n}=J$. Moreover whenever $J$ is prime (respectively maximal) in $S_{n}$, then $\langle J\rangle^{E}$ is prime (respectively maximal) in $S[\X]^E$.
\ecor
\pr We apply Proposition \ref{E-Y}.\qed

\subsection{Nullstellensatz}
\begin{corollary}\label{WN} {\rm (Weak Nullstellensatz)} Let $(R,E)$ be an $E$-field and $f_1,\ldots, f_m\in R[\X]^E$. Let $n\in\mathbb N$ be chosen minimal such that $f_1,\ldots, f_m\in R_n$. Assume the ideal $I_{n}$ generated by $f_1,\ldots, f_m$ is proper and that there is a maximal ideal $M_{n}$ of $R_n$ containing $I_{n}$ with the property $(\Ext)_{n}$ (as an ideal of $R_{n}$).
Then $f_1,\ldots, f_m$ have a common zero in an $E$-field extending $(R,E)$. 
\end{corollary}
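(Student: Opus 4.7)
The plan is essentially a direct application of Corollary~\ref{E-E} combined with the standard quotient construction. Since $M_n$ is a \emph{maximal} ideal of $R_n$ satisfying $(\dagger)_n$, Corollary~\ref{E-E} produces a maximal exponential ideal $M^E$ of $R[\X]^E$ with $M^E\cap R_n=M_n$. The $E$-field extending $(R,E)$ in which the $f_j$'s acquire a common zero will then be the quotient $F:=R[\X]^E/M^E$, with the $f_j$'s vanishing at the tuple $\bar a:=(X_1+M^E,\ldots, X_n+M^E)$.

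The three items to verify are (i) that $F$ is an $E$-field, (ii) that the natural map $R\to F$ is injective and respects $E$, so that $(R,E)$ actually embeds in $F$, and (iii) that $\bar a$ is a common zero of $f_1,\ldots,f_m$. For (i), maximality of $M^E$ makes $F$ a field, while the remark made in subsection~2.2 after the definition of exponential ideal shows that, since $M^E$ is an $E$-ideal, $F$ inherits a well-defined exponential function $E_{M^E}$ sending $g+M^E$ to $E(g)+M^E$. For (ii), the kernel of the composition $R\hookrightarrow R_n \twoheadrightarrow F$ is $R\cap M^E=R\cap M_n$; as $R$ is a field and $M_n$ a proper ideal of $R_n$, this kernel equals $\{0\}$, and compatibility with $E$ is automatic since $E_{M^E}$ is induced from $E$ on $R[\X]^E$, which itself extends $E$ on $R$. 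For (iii), by construction of $F$ as an $E$-ring quotient, the evaluation of any $E$-polynomial $p\in R[\X]^E$ at $\bar a$ is simply $p+M^E$; since $f_j\in I_n\subset M_n\subset M^E$, we get $f_j(\bar a)=0$ in $F$.

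There is no real obstacle beyond this bookkeeping: the heavy lifting -- constructing a maximal $E$-ideal of $R[\X]^E$ out of a maximal ideal $M_n$ of $R_n$ satisfying $(\dagger)_n$ -- has already been done in Proposition~\ref{E} and Corollary~\ref{E-E}. The single point at which the hypothesis that $R$ is a \emph{field} (rather than merely an $E$-ring) enters the argument is step (ii), where it forces $R\cap M_n=\{0\}$ and hence ensures that the extension $(R,E)\subset (F,E_{M^E})$ is genuine.
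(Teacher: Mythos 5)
Your proposal is correct and follows essentially the same route as the paper: the paper's proof likewise extends $M_n$ to a maximal exponential ideal of $R[\X]^E$ (via Proposition~\ref{E}/Corollary~\ref{E-E}; the paper's citation of Proposition~\ref{E-Y} there appears to be a slip) and takes the quotient, in which $(X_1+M^E,\ldots,X_n+M^E)$ is the common zero. Your extra bookkeeping in steps (ii) and (iii) only makes explicit what the paper leaves implicit.
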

\begin{proof} By Proposition \ref{E-Y}, the $E$-ideal $\langle M_{n}\rangle^E$ is a proper maximal $E$-ideal of $R[\X]^E$. The quotient $R[\X]^E/\langle M_{n}\rangle^E$ is an $E$-field in which $(X_{1}+\langle M_{n}\rangle^E,\ldots, X_{n}+\langle M_{n}\rangle^E)$ is a common zero of $f_1,\ldots, f_m$.
\end{proof}

\rem \label{WNrem}$\;$
\begin{enumerate}
\item In the statement of the corollary above we may assume that the $E$-polynomials $f_{1},\ldots,f_{m}$ have non-trivial polynomial parts, namely that for $1\leq i\leq m$, $ord(f_{i})$ is of the form $\omega^n.m_{n}+\ldots+m_{0}$ with $m_{n}.m_{0}\neq 0$, $m_{\ell}\in \IN$, $0\leq \ell\leq n$  (see Remark \ref{order}).
\item If, in the above corollary we simply assume that the ideal $I_{n}$ generated by $f_1,\ldots, f_m$ is proper and contained in a prime ideal $P_{n}$  with the property $(\Ext)_{n}$ (as an ideal of $R_{n}$), then $f_1,\ldots, f_m$ have a common zero in an $E$-domain extending $(R,E)$ (and so in the field of fractions of this $E$-domain). 
\end{enumerate}
\erem

\dfn Let $I$ be an ideal of $R_{\ell}$, $\ell\geq 1$. We define $J_{E}(I)$ as the intersection of all maximal ideals $M$ of $R_{\ell}$ containing $I$ with the property$(\Ext)_{\ell}$.
If there are no such maximal ideals, then we set $J_{E}(I)=R_{\ell}.$
\edfn
\thm \label{Nul} Let $f_{1},\ldots, f_{m}, g\in R[\bold{X}]^E$ and let $\ell$ be minimal such that $f_{1},\ldots, f_{m}, g\in R_{\ell}$.  Let $I$ be the ideal of $R_{\ell}$ generated by $f_{1},\ldots, f_{m}$. Suppose that $J_{E}(I)$ is a proper ideal. 
Assume that $g$ vanishes at each common zero of $f_{1},\ldots, f_{m}$ in any (partial) $E$-field containing $(R,E)$. Then $g$ belongs to $J_{E}(I)$.
\ethm
\pr 
Consider the ring $\Se$ and the element $1-Y.g$, then $1-Y.g\in S_{\ell}$. Since $J_{E}(I)$ is proper, there is a maximal ideal $M$ of $R_{\ell}$ containing $I$ with property $(\Ext)_{\ell}$. Let
 $J$ be an ideal of $S_{\ell}$ containing $M$ and $1-Y.g$. Assume that $J$ is proper, so $J\cap R_\ell=M$ and it embeds into a maximal ideal of $S_\ell$. Then, we embed it in a maximal $E$-ideal $\tilde M^{E}$ of $\Se$, by Proposition \ref{E-Y}.  
Since $\tilde M^E\cap R[\X]^E$ is a prime ideal, the quotient $R[\X]^E/\tilde M^E\cap R[\X]^E$ is an $E$-domain containing $(R,E)$ which embeds in the partial $E$-field $\Se/\tilde M^E$ where $f_{1},\ldots, f_{m}, 1-gY$ have a common zero, a contradiction. 
\par Therefore we have 
\[1=\sum_{i} t_{i}(X,Y).h_{i}(X)+(1-Y.g).r(X,Y),\] with $h_i(X)\in M$, $t_{i}(X,Y), r(X,Y)\in  S_{\ell}$. Note that the elements of $S_{\ell}$, $\ell\geq 1$, are of the form $S_{0}[t^{A_{0}\oplus \ldots \oplus A_{\ell-1}}]\cong R_{\ell}\otimes_{R} R[Y]$. So we may substitute $g^{-1}$ to $Y$ and find a sufficiently big power $g^d$, $d>0$, of $g$ such that by multiplying each $t_{i}(X,g^{-1})$, $r(X,Y)$, we obtain again an element in $S_{\ell}$. Since $t_i(X,g^{-1}).g^{d}\in R_{\ell}$, we get 
\[g^{d}=\sum_{i} (t_i(X,g^{-1}).g^{d}).h_{i}(X)\in R_{\ell}.\]  Therefore $g^d\in M$ and since $M$ is maximal, $g\in M$. Since we can do that for any maximal ideal $M$ containing $I$ with property $(\Ext)_{\ell}$, we get that $g\in J_{E}(I)$.
 \qed

\

\rem The problem with replacing in the theorem above, maximal ideals $M$ with prime ideals $\Ps$ is that $J\cap R_{\ell}$ might be bigger than $\Ps$ and so the condition $(\Ext)_\ell$ might not hold anymore.
\erem

\par Using Proposition \ref{R_{1}}, we may deduce a stronger result in the case when the $E$-polynomials have only one iteration of the exponential function (namely the case $\ell=1$).
\cor Let $f_{1},\ldots, f_{m}, g\in R_1$ and let $I$ be the ideal of $R_{1}$ generated by $f_{1},\ldots, f_{m}$.
Assume that $g$ vanishes at each common zero of $f_{1},\ldots, f_{m}$ in any partial $E$-field containing $(R,E)$. Then $g$ belongs to $J(I)$, the Jacobson radical of $I$.
\ecor

\pr We apply Theorem \ref{Nul} and we may replace $J_E(I)$ by the ordinary Jacobson radical of $I$ since by Proposition \ref{R_{1}}, every maximal ideal of $R_1$ satisfies $(\Ext)_1$.
\qed
\par Finally we ask ourselves the question of when does a system of $E$-polynomials with coeffcients in $F$ have a common zero in an $Ecl$-closure of $R$.
\cor\label{Kh} Let $f_1,\ldots,f_n\in R_k$ and assume that $k\in \IN$ is minimal such. Assume that the ideal $I:=(f_1,\ldots,f_n)$ of $R_k$ generated by $f_1,\ldots,f_n$ is disjoint from $\{det(J_{\bar f})^m:\;m\in \IN^*\}$. Let $\Ps$ be an ideal of $R_k$ containing $I$ and maximal for the property of being disjoint from $\{det(J_{\bar f})^m:\;m\in \IN^*\}$. Suppose $\Ps$ satisfies $(\Ext)_k$. 
Then $f_1,\ldots,f_n$ have a zero $\bar \alpha\in \Ecl^{F}(R)$ where $(F,E)$ in a partial $E$-field extending $(R,E)$.
\ecor
\pr
It is well-known that $\Ps$ is a prime ideal of $R_k$. Since we assumed that it satisfies the hypothesis that for any $f\in \Ps\cap R_{k-1}$, $E(f)-1\in \Ps$, we may apply Corollary \ref{E-E} and get that $\langle \Ps\rangle^E$ is a prime $E$-ideal of $R[\X]^E$ with $\langle \Ps\rangle^E\cap R_k=\Ps$. Then the element $\bar \alpha:=(X_1+\langle \Ps\rangle^E,\ldots,X_n+\langle \Ps\rangle^E)$ satisfies the formula $H_{\bar f}$ in the $E$-domain $R[\X]^E/\langle \Ps\rangle^E.$ 
 Let $F$ be the fraction field of $R[\X]^E/\langle \Ps\rangle^E$. Then we have that $\bar \alpha$ belongs to $\Ecl^{F}(R)$ (see Definition \ref{ecl}).
\qed 

\subsection{Real Nullstellensatz}
In this section we adapt our previous results to the case of $E$-fields (or $E$-rings) which are ordered (or admits an ordering, i.e. are orderable). We will refer to such $E$-fields as ordered $E$-fields (or ordered $E$-rings), respectively orderable ones.

\dfn \cite[Definition 2.1]{Lam} Let $(R,E)$ be an $E$-field. An ideal $I$ of $R$ is real if for any $u_1,\ldots,u_n\in R$ such that $\sum_{i=1}^n u_i^2\in I$, then $u_i\in I$ for all $i=1,\ldots,n$. 
A real $E$-ideal $I$ of $(R,E)$ is a real ideal which is also an $E$-ideal.
\par  We denote by $\Sigma$ the set of sums of squares in $R$.
\edfn

\ex  Let $R$ be an ordered $E$-domain, then the set of $E$-polynomials in $R[\X]^E$ which vanish on a subset of $R^n$ is a real $E$-ideal.
\eex

\lem  \label{aug-real} Let $B_{0}$ be a ring of characteristic $0$, $G$ a torsion-free abelian group and $B_1$ be the group ring $B_{0}[G]$. Let $I$ be a real ideal of $B_0$ and let $\phi^a$, $\phi^a_I$ as in Notation \ref{aug-nota}. The kernel $I_{1}$ in $B_1$ of the map $\phi^a_{I}$ is a real ideal of $B_{1}$. 
\elem
\pr Suppose that $\sum_{i=1}^n\; u_i^2\in I_1$, so $\sum_{i=1}^n\; \phi^a_I(u_i)^2=0$. Therefore, $\sum_{i=1}^n\; \phi^a(u_i)^2\in I$. Since $I$ is real, it implies that $\phi^a(u_i)\in I$ for all $1\leq i\leq n$, equivalently that $\phi^a_I(u_i)=0$ for all $1\leq i\leq n$.
\qed

\begin{lemma} \label{Lemma 8} Let $n\in\mathbb N$, $I_n\subseteq R_n$ be a  prime real ideal with the property $(\Ext)_n$. Then $I_n$ embeds in a  prime real ideal $I_{n+1}$ of $R_{n+1}$ with properties $(\Ext)_{n+1}$ and $(\Tr)_{n+1}$.
\end{lemma}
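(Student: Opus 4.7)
The plan is to reuse the construction of $I_{n+1}$ from Proposition \ref{E} and then upgrade it to the real setting by invoking Lemma \ref{aug-real}; all of the non-realness content has already been handled by the previous work.

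First I would apply Proposition \ref{E} to $I_n$: since $I_n$ is a proper prime ideal of $R_n$ satisfying $(\dagger)_n$, the proposition furnishes a proper prime ideal $I_{n+1}$ of $R_{n+1}$ that satisfies both $(\dagger)_{n+1}$ and $(\dagger\dagger)_{n+1}$. Recall that by construction this $I_{n+1}$ is the kernel of the composite map $\phi_{I_n}$ obtained from the augmentation-type map $\sum_i r_i E(u_i) \mapsto \sum_i r_i$ on $R_n[E(\tilde I_n \oplus \tilde A_n)] \cong R_{n+1}$ (using the rewriting provided by Lemma \ref{aug+}) followed by the quotient $R_n \to R_n/I_n$. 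So the only remaining point is to verify realness.

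To establish realness I would apply Lemma \ref{aug-real} with $B_0 := R_n$, $G := E(\tilde I_n \oplus \tilde A_n)$, and $I := I_n$. The hypotheses are straightforward: $R_n$ has characteristic zero and $I_n$ is real by assumption, while $G$ is a torsion-free abelian group because $A_n$ is a free $R_{n-1}$-module and hence torsion-free as an additive group, so its subgroup $\tilde I_n \oplus \tilde A_n$ is torsion-free, and $E$ maps this subgroup bijectively onto its multiplicative image in $R_{n+1}^\times$ (this injectivity is precisely the uniqueness-of-rewriting statement at the end of the proof of Lemma \ref{aug+}). Since the kernel denoted $I_1$ in Lemma \ref{aug-real} is, by inspection of Notation \ref{aug-nota}, literally the kernel $\ker(\phi_{I_n})$ that we already identified with $I_{n+1}$, Lemma \ref{aug-real} yields that $I_{n+1}$ is real, completing the proof.

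There is no genuinely hard step here: the main content has been packaged into Proposition \ref{E} for the structural properties and into Lemma \ref{aug-real} for the transport of realness from $I_n$ to its group-ring extension. The only thing to watch is the identification of the two descriptions of $I_{n+1}$, which is immediate from the definitions once one notices that the augmentation map used in Proposition \ref{E} is exactly the $\phi^a$ of Notation \ref{aug-nota} applied with $B_0 = R_n$ and $G = E(\tilde I_n \oplus \tilde A_n)$.
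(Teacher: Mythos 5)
Your proposal is correct and follows exactly the paper's own (very terse) proof, which simply cites Proposition \ref{E} for the prime/$(\dagger)$/$(\dagger\dagger)$ properties and Lemma \ref{aug-real} for realness of the kernel of $\phi_{I_n}$. You have merely spelled out the identification $R_{n+1}\cong R_n[E(\tilde I_n\oplus\tilde A_n)]$ and the torsion-freeness of the group, which the paper leaves implicit.
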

\begin{proof} By Lemma \ref{aug-real} and Proposition \ref{E}.
\end{proof} 

\begin{corollary}\label{Pos}  Let $(R,E,<)$ be an ordered $E$-field.
Let $f_1,\ldots, f_m\in R[\X]^E$ and let $k\in\mathbb N$ be chosen minimal such that $f_1,\cdots, f_m\in R_k$.  
Assume the ideal $I$ generated by $f_1,\cdots, f_m$ is disjoint from the set $1+\Sigma$. Let $\Ps$ be an ideal of $R_k$ maximal for the property of containing $I$ and being disjoint from  $1+\Sigma$. Suppose that $\Ps$ satisfies $(\Ext)_k$ 
Then $f_1,\cdots, f_m$ have a common zero in a partial ordered $E$-domain extending $(R,E)$. 
\ecor
\pr It is well-known that such ideal $\Ps$ is a prime real ideal  by \cite[Lemma 1.2, Remark 1.3, Definition 2.1]{Lam}.
It remains to apply Lemma \ref{Lemma 8} and Corollary \ref{E-E}, checking that a union of a chain of real ideals is a real ideal. So $\langle \Ps\rangle^E$ is a prime, real $E$-ideal. The quotient $D:=R[\X]^E/\langle\Ps\rangle^E$ is an orderable $E$-domain  \cite[Theorem 3.9]{Lam}. Moreover, $f_{1},\cdots,f_{m}$ have a common zero in $D$.
\qed 

\rem Note that when the exponential function satisfies the growth condition: $E(f)\geq 1+f$ for any $f\in R[\X]^E/\langle\Ps\rangle^E$, then by results of Dahn and Wolter \cite[Theorem 24]{DH}, one can embed this partial $E$-domain in a real-closed $E$-field where the exponential function is surjective on the positive elements.
\erem
 \medskip
 \par {\bf Acknowledgments:} These results are part of the PhD thesis of Nathalie Regnault \cite{R}. The second author would like to thank the model theory group of Manchester university for the opportunity to give a talk on this topic.

\end{document}